\documentclass{amsart}

\newtheorem{theorem}{Theorem}[section]
\newtheorem{lemma}[theorem]{Lemma}
\newtheorem{proposition}[theorem]{Proposition}

\theoremstyle{definition}

\theoremstyle{remark}
\newtheorem{remark}[theorem]{Remark}

\numberwithin{equation}{section}

\usepackage{graphicx}

\usepackage[colorlinks=true, citecolor=blue, linkcolor=blue]{hyperref} 
\newcommand{\mku}{\mkern1mu}

\newcommand{\deq}{\mathrel{\mathop:}=}

\newcommand{\PP}{\mathbb{P}}
\newcommand{\EE}{\mathbb{E}}

\DeclareMathOperator{\ESF}{ESF}
\DeclareMathOperator{\Poi}{Poisson}
\DeclareMathOperator{\Gam}{Gamma}
\DeclareMathOperator{\PD}{PD}

\begin{document}

\title[On the Golomb--Dickman constant under Ewens sampling]{On the Golomb--Dickman constant \\ under Ewens sampling}

\author[J. R. G. Mendon\c{c}a]{Jos\'{e} Ricardo G. Mendon\c{c}a}
\address{Escola de Artes, Ci\^{e}ncias e Humanidades, Universidade de S\~{a}o Paulo, SP, Brazil.}
\email{jricardo@usp.br}
\thanks{JRGM was partially supported by research grant AP.R 2020/04475-7 from FAPESP, Brazil.}

\author[L. J. Negret]{Luis Jehiel Negret}
\address{Instituto de Matemática e Estat\'{\i}stica, Universidade de S\~{a}o Paulo, SP, Brazil}
\email{ljnegrett@ime.usp.br}
\thanks{LJN was partially supported by a postgraduate fellowship from CNPq, Brazil.}

\subjclass[2020]{Primary 60G55}

\keywords{Random permutations; probabilistic combinatorics; cycle structure; Ewens sampling formula; Poisson process; Kingman's construction}

\date{April 22, 2026}

\begin{abstract}
We define a generalized Golomb--Dickman constant $\lambda_{\theta}$ as the limiting expected proportion of the longest cycle in random permutations under the Ewens measure with parameter $\theta > 0$. Exploiting the independence properties of Kingman's Poisson process construction of the Poisson--Dirichlet distribution, we obtain an explicit integral representation for $\lambda_{\theta}$ in terms of the exponential integral. The dependence of $\lambda_{\theta}$ on $\theta$ reflects the transition between regimes dominated by long cycles (small $\theta$) and those with many small cycles (large $\theta$). We also derive the asymptotic behavior of $\lambda_{\theta}$ for small and large $\theta$ and illustrate our results with numerical computations, Monte Carlo simulations of the Hoppe urn, and an application.
\end{abstract}

\maketitle


\section{Introduction}
\label{sec:intro}

The cycle structure of random permutations is a classical, almost foundational topic in probabilistic combinatorics~\cite{Goncharov1944,Touchard1939}. For permutations of $[\mku{n}\mku] \deq \{1,\dots,n\} \subseteq \mathbb{N}$ drawn uniformly at random, a classical result of Shepp and Lloyd~\cite{SheppLLoyd1966} shows that the normalized length $L_{n}/n$ of the longest cycle converges to a non-degenerate random variable with expectation
\begin{equation}
\lim_{n \to \infty} \mathbb{E}\biggl[\frac{L_{n}}{n}\biggr] = \lambda,
\end{equation}
where $\lambda \approx 0.624330$ is the Golomb--Dickman constant.
The distribution of $L_n/n$ converges to a non-degenerate limit closely
related to the Dickman function~\cite{ABT2003}.

A natural extension of the uniform measure is the Ewens distribution with parameter $\theta>0$, which assigns probability proportional to $\theta^{C(\sigma)}$ to a permutation $\sigma$, where $C(\sigma)$ denotes its number of cycles~\cite{Crane2016,Ewens1972,Tavare2021}. This model arises in population genetics as the sampling distribution of allele frequencies under neutral evolution. Under the Ewens distribution, the numbers of cycles of fixed length converge asymptotically to independent Poisson random variables, and the normalized ordered cycle lengths converge to a Poisson--Dirichlet distribution $\PD(\theta)$ with parameter $\theta$~\cite{ABT2003,Arratia1992,Pitman2006}. The Poisson approximation for cycle counts, according to which the numbers of cycles of length $j$ behave approximately as independent $\Poi(\theta/j)$ variables, greatly simplifies the analysis of cycle statistics. Similar constructions appear in the study of the prime factorization of large integers, revealing a form of universality in the decomposition of large objects into smaller components~\cite{Ford2022,TerryTao2013}.

While the global structure of cycles is well established, explicit characterizations of extremal statistics remain scarce. The general Poisson--Dirichlet framework developed in~\cite{ABT2003,Arratia1992,Pitman2006} expresses everything in terms of the Dickman function (which lacks a closed-form solution in terms of elementary functions), the somewhat unyielding GEM distribution, or the Poisson process itself, besides all the combinatorics, and extracting computable integrals for cycle statistics can be challenging. Our goal is to evaluate the asymptotic expectation of the longest cycle under Ewens sampling using a more direct, probabilistic approach. We introduce the constant 
\begin{equation}
\label{eq:limlambda}
\lambda_{\theta} \deq \lim_{n \to \infty} \frac{\mathbb{E}_{n}^{\theta}[L_{n}]}{n},
\end{equation}
which extends the classical Golomb--Dickman constant, and derive an explicit integral representation for $\lambda_{\theta}$ in terms of the exponential integral. Such an integral representation for the limiting expected value \eqref{eq:limlambda} was previously derived in the continuous Poisson--Dirichlet setting by Holst~\cite{Holst2001} (see Remark~\ref{rmk:holst}). We recover this exact formula using Kingman's Poisson process construction of $\PD(\theta)$ and the distribution of its largest atom. This discrete-to-continuous derivation allows us to analyze $\lambda_{\theta}$, including its asymptotic behavior and combinatorial interpretations.

The paper is organized as follows. After briefly recalling the Poisson representation of Ewens permutations, we derive the limiting distribution of the largest atom of the Poisson process in the associated weighted model. We then establish an integral representation for $\lambda_{\theta}$ by exploiting the independence properties of Kingman's Poisson process construction of the Poisson--Dirichlet distribution $\PD(\theta)$, and extract the asymptotic behavior of $\lambda_{\theta}$ for small and large $\theta$. We conclude with numerical computations, Monte Carlo simulations of the Hoppe urn, and a combinatorial application to the spaghetti hoops problem.


\section{The Poisson representation of Ewens permutations}
\label{sec:poisson}

We briefly recall the Poisson representation of Ewens permutations~\cite{ABT2003,Arratia1992,Pitman2006}. Let $S_{n}$ denote the symmetric group on $[\mku{n}\mku]$, and for $\sigma \in S_{n}$, let $C_{j}(\sigma)$ be its number of cycles of length $j$ and $C(\sigma) = \sum_{j \ge 1} C_{j}(\sigma)$ its total number of cycles (note that $C_{j} = 0$ for $j>n$). The Ewens measure with parameter $\theta > 0$ over $S_{n}$ is given by
\begin{equation}
\mathbb{P}_{n}^{\theta}(\sigma) = 
\frac{\theta^{C(\sigma)}}{\theta(\theta+1)\cdots(\theta+n-1)},
\end{equation}
and the ensuing distribution, known as the Ewens sampling formula of parameter $\theta$, is denoted by $\ESF(\theta)$. The cycle counts in this model admit a Poisson approximation. For each fixed $j$, the distribution of $C_{j}(\sigma)$ is well approximated by
\begin{equation}
C_{j}(\sigma) \approx \Poi(\theta/j),
\end{equation}
and the variables $(C_{j})_{j \ge 1}$ are asymptotically independent in total variation when restricted to cycle lengths $j \le b(n)$ with $b(n) = o(n)$ or, equivalently, are restricted to finitely many cycle lengths~\cite{Arratia1992,Pitman2006}. A convenient way to formalize this approximation is through the construction of a suitable Poisson process.

Let $(N(t))_{t \ge 0}$ be a Poisson process of rate $1$, and let $(I_j)_{j \ge 1}$ be disjoint intervals in $[0,\infty)$ with $|I_j| = \theta/j$, $j \ge 1$. Define
\begin{equation}
\mu_{j} \deq N(I_{j}), \quad j \ge 1.
\end{equation}
The variables $(\mu_{j})_{j \ge 1}$ are independent and satisfy
\begin{equation}
\mu_{j} \sim \Poi(\theta/j).
\end{equation}
In terms of these variables, the Ewens distribution can be related to the law of $(\mu_j)_{j \ge 1}$ conditioned on the total size constraint
\begin{equation}
\label{eq:jmu}
\sum_{j \ge 1} j\mu_{j} = n.
\end{equation}
This representation translates questions about cycle structure to properties of independent Poisson variables. Variants of this construction arise in more general models of random permutations with nonuniform cycle weights as well~\cite{Ercolani2014}.

To approximate the constrained model in which \eqref{eq:jmu} holds, we replace the Poisson means $\theta/j$ by $\theta e^{-sj}/j$, where $s > 0$ is a conjugate parameter controlling the expected total size. This parameter shifts the unconstrained expectation $\EE[\mku\sum_{j \ge 1} j\mu_j\mku]$ to a value of order $1/s$, so that choosing $s \sim 1/n$ tunes the model to the correct scale. For $s>0$, we shall thus consider independent random variables $(\mu_{j})_{j \ge 1}$ with
\begin{equation}
\label{eq:mujesj}
\mu_{j}^{(s)} \sim \Poi\Bigl(\frac{\theta e^{-sj}}{j}\Bigr), \quad j \ge 1,
\end{equation}
This exponential tilting isolates the contribution of large cycles and facilitates the analysis of the longest cycle. Relaxing the size constraint to obtain independent cycle counts is analogous to the passage from the canonical to the grand-canonical ensemble in statistical mechanics~\cite{Campisi2024}.

\begin{remark}
\label{rmk:poissonmodel}
One must distinguish the actual cycle counts $C_{j}$, which are strictly dependent due to the partition constraint $\sum_{j \ge 1} jC_{j} = n$, from the unconstrained auxiliary variables $\mu_{j}$.  The independent tilted Poisson model $(\mu_{j}^{(s)})_{j \ge 1}$ circumvents the constraint, allowing the longest cycle under the Ewens measure to be analyzed directly via the largest occupied index of the unconstrained sequence and the independence properties of Kingman's Poisson process construction (see Section~\ref{sec:lambda}).
\end{remark}


\section{The scaling limit of the largest Poisson atom}
\label{sec:scaling}

We now determine the distribution of the largest occupied index in the tilted Poisson model. As shown in Section~\ref{sec:lambda}, this coincides with the distribution of the largest atom in a Kingman Poisson process, yielding $\lambda_{\theta}$.

Let $(\mu_{j}^{(s)})_{j \ge 1}$ be independent random variables distributed as in \eqref{eq:mujesj}, and define the largest occupied index in the tilted model as
\begin{equation}
L^{(s)} \deq \max\{j \ge 1 \colon \mu_{j}^{(s)} > 0\}.
\end{equation}
The distribution of $L^{(s)}$ can be computed explicitly and admits a scaling limit.

\begin{lemma}
\label{lemma:largest}
For each $k \ge 1$, we have
\begin{equation}
\mathbb{P}(L^{(s)} < k)
= \exp\biggl(-\sum_{j \ge k} \frac{\theta e^{-sj}}{j}\biggr).
\end{equation}
Moreover, under the scaling $x = sk$, we have
\begin{equation}
\mathbb{P}(s L^{(s)} \le x)
\;\longrightarrow\;
\exp\bigl(-\theta E_{1}(x)\bigr) \text{ as } s \to 0,
\end{equation}
where
\begin{equation}
 E_{1}(x) = \int_{x}^{\infty} \frac{e^{-t}}{t}\,dt
\end{equation}
is the exponential integral function of a nonnegative real argument \cite[{\S}15.09]{Jeffreys1972}.
\end{lemma}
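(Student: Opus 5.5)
The exact formula follows from independence. The event $\{L^{(s)} < k\}$ is the event that $\mu_{j}^{(s)} = 0$ for every $j \ge k$. Since the $\mu_{j}^{(s)}$ are independent Poisson variables with means $\theta e^{-sj}/j$, we have $\PP(\mu_j^{(s)}=0) = \exp(-\theta e^{-sj}/j)$. The product over $j \ge k$ is then the exponential of minus the sum.

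To justify the infinite product, I will note two things. First, $\sum_{j\ge 1}\theta e^{-sj}/j = -\theta\log(1-e^{-s}) < \infty$ for $s>0$. Second, the events $\{\mu_j^{(s)}=0 \text{ for } k\le j\le m\}$ decrease in $m$ to $\{L^{(s)}<k\}$. Continuity of probability then passes the finite products to the limit. The same summability shows, via Borel--Cantelli, that $L^{(s)}<\infty$ almost surely, so $L^{(s)}$ is a well-defined random variable (set $L^{(s)}=0$ if all $\mu_j^{(s)}=0$).

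For the scaling limit, fix $x>0$ and write
\[
\PP(sL^{(s)}\le x) = \PP\bigl(L^{(s)} < k_s\bigr), \qquad k_s \deq \lfloor x/s\rfloor + 1 .
\]
By the first part,
\[
\PP(sL^{(s)}\le x) = \exp\Bigl(-\theta \sum_{j\ge k_s} \frac{e^{-sj}}{j}\Bigr).
\]
Since $\exp$ is continuous, it suffices to show that $\sum_{j\ge k_s} e^{-sj}/j \to E_1(x)$ as $s\to 0$. I rewrite this sum as a Riemann sum with mesh $s$ at the points $t_j = sj$:
\[
\sum_{j\ge k_s} \frac{e^{-sj}}{j} = \sum_{j \ge k_s} s\,\frac{e^{-t_j}}{t_j}.
\]
The starting point satisfies $t_{k_s} = s k_s \in (x, x+s]$. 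The function $f(t)=e^{-t}/t$ is positive and decreasing on $(0,\infty)$, so the standard comparison for monotone functions gives
\[
\int_{t_{k_s}}^{\infty} f(t)\,dt \;\le\; \sum_{j\ge k_s} s f(t_j) \;\le\; s f(t_{k_s}) + \int_{t_{k_s}}^{\infty} f(t)\,dt .
\]
As $s\to 0$, we have $t_{k_s}\to x$. Hence $s f(t_{k_s}) \le s f(x) \to 0$, and $\int_{t_{k_s}}^\infty f \to E_1(x)$ because $E_1$ is continuous on $(0,\infty)$. This proves the convergence for every $x>0$.

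The only delicate point is the regime near $t=0$, where $f$ blows up. This does no harm here: for fixed $x>0$ the lower endpoint stays bounded away from $0$. For $x\le 0$ both sides are $0$, since $L^{(s)}\ge 1$ with probability tending to one and $E_1(0^+)=\infty$, so the limit also holds trivially there. The resulting limit $F(x)=\exp(-\theta E_1(x))$ is a genuine distribution function: it is continuous and increasing, with $F(0^+)=0$ and $F(\infty)=1$. The statement therefore amounts to convergence in distribution of $sL^{(s)}$ to the law with distribution function $F$.
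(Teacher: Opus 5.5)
Your proof is correct and takes the same approach as the paper. Independence turns $\{L^{(s)}<k\}$ into a product of void probabilities, and a monotone sum--integral comparison for $t\mapsto e^{-t}/t$ (the paper's $x\mapsto e^{-sx}/x$ after substituting $u=sx$) gives the limit $E_1(x)$. You also fill in what the paper leaves implicit: the infinite product, finiteness of $L^{(s)}$, the choice $k_s=\lfloor x/s\rfloor+1$ with two-sided bounds, and the endpoint $x=0$, where the left side is $(1-e^{-s})^{\theta}\to 0$ rather than exactly $0$.
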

\begin{proof}
By independence,
\begin{equation}
\mathbb{P}(L^{(s)} < k)
= \prod_{j \ge k} \mathbb{P}(\mu_{j}^{(s)} = 0)
= \exp\biggl(-\sum_{j\ge k} \frac{\theta e^{-s j}}{j}\biggr).
\end{equation}
A standard sum--integral comparison for the monotone function $x \mapsto e^{-s x}/x$ yields
\begin{equation}
\sum_{j\ge k} \frac{e^{-s j}}{j}
= \int_{s k}^\infty \frac{e^{-u}}{u}\,du + o(1) \text{ as } s \to 0,
\end{equation}
which gives the stated limit.
\end{proof}

The function $\exp[-\theta E_{1}(x)]$ is the probability that a Poisson process on $(0,\infty)$ with intensity $\theta e^{-t}/t$ places no atoms above~$x$; it is, therefore, the distribution function of the largest atom of this process. For general $\theta > 0$, it approximates but does not quite equal the distribution of the largest $\PD(\theta)$ component; the discrepancy comes from the normalization by the total mass. Despite this somewhat subtle difference, the distribution of the largest atom suffices to compute the expectation of the largest normalized cycle via Kingman's construction in the next section.


\section{Integral representation of $\lambda_{\theta}$ and its asymptotics}
\label{sec:lambda}

\subsection{The main theorem}
\label{subsec:main}

Our main result, an integral formula for $\lambda_{\theta}$, is given by the following theorem.

\begin{theorem} 
Let $L_{n}$ denote the length of the longest cycle of a random permutation distributed according to the Ewens measure with parameter $\theta>0$. Then the limit
\begin{equation}
\lambda_{\theta} 
\deq \lim_{n \to \infty} \frac{\mathbb{E}_{n}^{\theta}[L_{n}]}{n}
\end{equation}
exists and is given by
\begin{equation}
\lambda_{\theta}
= \int_{0}^{\infty} \exp\bigl[-t-\theta E_{1}(t)\bigr]\,dt.
\end{equation}
\end{theorem}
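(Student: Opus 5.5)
We argue in two stages: first we reduce $\lambda_\theta$ to the expected largest component of $\PD(\theta)$, and then we compute that expectation using Kingman's construction together with Lemma~\ref{lemma:largest}.

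\emph{Stage 1: reduction to $\PD(\theta)$.} We invoke the classical fact recalled in Section~\ref{sec:poisson}: under $\PP_n^\theta$ the normalized ordered cycle lengths converge in distribution to $\PD(\theta)$. In particular $L_n/n \Rightarrow V_1$, where $V_1$ is the largest component of $\PD(\theta)$. Since $0\le L_n/n\le 1$, bounded convergence gives $\EE_n^\theta[L_n]/n\to \EE[V_1]$. This shows that the limit exists, and it remains to show that $\EE[V_1]=\int_0^\infty \exp[-t-\theta E_1(t)]\,dt$.

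\emph{Stage 2: Kingman's construction.} Let $\Pi$ be the Poisson process on $(0,\infty)$ with intensity $\theta e^{-x}x^{-1}\,dx$. This is the $s\to0$ scaling limit of the tilted model, as Lemma~\ref{lemma:largest} shows. Its total mass $T=\sum_{x\in\Pi}x$ has the $\Gam(\theta,1)$ distribution. Let $M=\max\Pi$. By Lemma~\ref{lemma:largest}, $\PP(M\le x)=\exp(-\theta E_1(x))$.

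Kingman's theorem states that the ranked normalized atoms $(x_{(i)}/T)$ follow $\PD(\theta)$ and are independent of $T$. Hence $V_1=M/T$ is independent of $T$. It follows that
\begin{equation}
\EE[M]=\EE[V_1 T]=\EE[V_1]\,\EE[T]=\theta\,\EE[V_1].
\end{equation}
This independence step is the heart of the argument.

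\emph{The obstacle: the factor of $\theta$.} From the distribution function of $M$ we get
\begin{equation}
\EE[M]=\int_0^\infty\bigl(1-e^{-\theta E_1(x)}\bigr)\,dx,
\end{equation}
so that $\EE[V_1]=\theta^{-1}\int_0^\infty(1-e^{-\theta E_1(x)})\,dx$. This is not literally the claimed formula, and the main obstacle is to convert it.

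We integrate by parts, using $\tfrac{d}{dx}e^{-\theta E_1(x)}=\theta x^{-1}e^{-x}e^{-\theta E_1(x)}$. The boundary terms vanish: at infinity, $1-e^{-\theta E_1(x)}\sim\theta E_1(x)$ decays exponentially, and at $0$ the factor $x$ kills the bounded integrand. This gives
\begin{equation}
\int_0^\infty\bigl(1-e^{-\theta E_1(x)}\bigr)\,dx=\int_0^\infty x\cdot\theta x^{-1}e^{-x}e^{-\theta E_1(x)}\,dx=\theta\int_0^\infty e^{-x-\theta E_1(x)}\,dx.
\end{equation}
Dividing by $\theta$ yields exactly the stated formula for $\lambda_\theta$.

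As a sanity check, for $\theta=1$ the result reduces to the known Golomb--Dickman integral $\int_0^\infty e^{-x-E_1(x)}\,dx$. If the citation-level facts are not admitted, we could replace the identity $\EE[M]=\theta\EE[V_1]$ by a direct route: write $\EE[V_1]=\EE[M/T]$ via $1/T=\int_0^\infty e^{-uT}\,du$ and the exponential formula for Poisson functionals. However, the independence argument above is the intended route.
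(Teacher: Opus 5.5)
Your proposal is correct and follows the paper's proof essentially step for step. You reduce to $\EE[Y_\theta]$ via convergence to $\PD(\theta)$ and bounded convergence, use Kingman's independence of the normalized atoms from the $\Gam(\theta,1)$ total mass to get $\EE[X_1]=\theta\,\EE[Y_\theta]$, and apply the tail formula; your integration by parts (with the boundary terms checked) is just an equivalent form of the paper's step of writing $1-e^{-\theta E_1(x)}=\theta\int_x^\infty t^{-1}e^{-t}e^{-\theta E_1(t)}\,dt$ and swapping the order of integration.
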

\begin{proof}
We compute the expected value of the longest cycle of a random permutation under the Ewens measure by exploiting the independence properties of Kingman's Poisson process construction of the Poisson--Dirichlet process $\PD(\theta)$~\cite{Kingman1975,Kingman1993}. The argument relies on the independence of the normalized partition from the total mass, together with the distribution of the largest atom from Lemma~\ref{lemma:largest}.

Let $L^{(s)}$ denote the largest occupied index in the tilted Poisson model. Lemma~\ref{lemma:largest} shows that, under the scaling $x = s k$,
\begin{equation}
\mathbb{P}(sL^{(s)} \le x) \longrightarrow \exp\bigl(-\theta E_{1}(x)\bigr),
\end{equation}
so that $sL^{(s)}$ converges in distribution to a random variable $X$
with distribution function
\begin{equation}
\mathbb{P}(X \le x) = \exp\bigl(-\theta E_{1}(x)\bigr).
\end{equation}
Independently, if $\Pi$ is a Poisson process on $(0,\infty)$ with
intensity $\theta e^{-t}/t$ and $X_{1} > X_{2} > \cdots$ denote its
atoms in decreasing order, then
\begin{equation}
\PP(X_{1} \le x) = \PP(\Pi((x,\infty))=0) = \exp(-\theta E_{1}(x)),
\end{equation}
so the limiting distribution of $sL^{(s)}$ coincides with the
distribution of the largest atom $X_{1}$ of $\Pi$. No process-level
convergence is required; the identification is purely at the level of
distribution functions.

This distribution, however, does not coincide pointwise with that of the largest component of the Poisson--Dirichlet partition due to the normalization by the total mass. Denote by $Y_{\theta}$ the largest component of a $\PD(\theta)$ random partition. The convergence of the normalized cycle lengths to $\PD(\theta)$ implies $L_{n}/n \to Y_{\theta}$ in distribution, and since $L_{n}/n \le 1$, dominated convergence gives $\lambda_{\theta} = \EE[Y_{\theta}]$.

To compute $\EE[Y_{\theta}]$, let $\Sigma = \sum_{i}X_{i}$ denote the total mass of $\Pi$. The normalized sequence $(X_{1}/\Sigma, X_{2}/\Sigma, \dots)$ has the $\PD(\theta)$ distribution, and $\Sigma$ is independent of the normalized partition, with $\Sigma \sim \Gam(\theta,1)$~\cite{Kingman1975,Kingman1993} (see also \cite[\S3.2]{Pitman2006} and~\cite{TerryTao2013}). Since $Y_{\theta} = X_{1}/\Sigma$ and $\Sigma$ is independent of $Y_{\theta}$, we have $\EE[X_{1}] = \EE[\Sigma]\mku\EE[Y_{\theta}] = \theta\lambda_{\theta}$. The tail expectation formula for non-negative random variables then gives
\begin{equation}
\EE[X_{1}] = \int_{0}^{\infty} \bigl[1-\exp[-\theta E_{1}(x)]\bigr]\,dx.
\end{equation}
Now, since $E_{1}'(x) = -e^{-x}/x$ we have
\begin{equation}
1-\exp[-\theta E_{1}(x)] = 
\theta \int_{x}^{\infty} \frac{e^{-t}}{t}\exp[-\theta E_{1}(t)]\,dt.
\end{equation}
Substituting this into the expression for $\EE[X_{1}]$, integrating over $x$, and swapping the integrations in $dx$ and $dt$ (the integrand is non-negative) furnishes
\begin{equation}
\EE[X_{1}] = \theta \int_{0}^{\infty} e^{-t} \exp[-\theta E_{1}(t)]\,dt,
\end{equation}
which divided by $\theta$ finally yields
\begin{equation}
\label{eq:ggd}
\lambda_{\theta} = \EE[Y_{\theta}] = \int_{0}^{\infty} \exp[-t-\theta E_{1}(t)]\,dt.
\end{equation}
\end{proof}

Since $E_{1}(t)$ is positive and decreasing, the integrand $\exp\bigl[-t-\theta E_{1}(t)\bigr]$ is decreasing in $\theta$ for each fixed $t>0$, and therefore $\lambda_{\theta}$ is decreasing in $\theta$. For $\theta<1$, the contribution of large values of $t$ is less suppressed, leading to larger values of $\lambda_{\theta}$ and a regime dominated by long cycles. When $\theta=0$, $\lambda_{0}=1$, meaning that the longest cycle is also the only cycle in the permutation. For $\theta > 1$, the factor $\exp[-\theta E_{1}(t)]$ decays more rapidly, resulting in smaller values of $\lambda_{\theta}$ and a regime where the mass is distributed among many smaller cycles. As $\theta \to \infty$, the Ewens measure increasingly favors permutations with many short cycles, and the longest cycle becomes negligible relative to $n$. This agrees with known behavior under the Ewens distribution and the Poisson–Dirichlet framework \cite{ABT2003,Arratia1992,Ercolani2014,Ford2022,Holst2001,Pitman2006}.

\begin{remark}
\label{rmk:didnot}
We do not address the rate of convergence of $\EE_{n}^{\theta}[L_{n}]/n$ to its limit or the full distribution of $L_{n}/n$. Quantitative bounds on the total variation distance between the cycle counts and their Poisson approximation are available \cite{Arratia1992,Ford2022} and could be used to estimate the convergence rate, but we do not pursue this here.
\end{remark}

\begin{remark}
\label{rmk:holst}
Lars Holst obtained, among other results, an integral expression for the $k$-th moment of the largest $\PD(\theta)$ component that reads \cite[Proposition~2.2]{Holst2001}
\begin{equation}
\label{eq:holst}
\EE[Y_{\theta}^{k}] = \int_{0}^{\infty} \dfrac{y^{k-1} \exp[-y-\theta E_{1}(y)]}{(\theta+1)\cdots(\theta+k-1)}\,dy.
\end{equation}
Although equation \eqref{eq:ggd} is the special case $k=1$ of Holst's equation, our derivation and subsequent analysis differ from Holst's work. Methodologically, our derivation proceeds via the scaling limit of the largest occupied index in the tilted Poisson model (Lemma~\ref{lemma:largest}) followed by an exchange of integrations that yields the closed-form integrand $\exp\bigl[-t-\theta E_{1}(t)\bigr]$ directly, whereas Holst integrates the density of the largest atom against powers of its argument within a finite-dimensional Dirichlet framework. In terms of scope, we emphasize the systematic study of $\lambda_{\theta}$ as a one-parameter extension of the classical Golomb--Dickman constant and analyze its monotonicity in~$\theta$, asymptotic behavior in the regimes $\theta \to 0$ and $\theta \to \infty$ (Section~\ref{sec:asymptotics}), numerical computation and simulation (Sections~\ref{subsec:numbers} and \ref{subsec:hoppe}), and possible combinatorial interpretations. None of these aspects appear in Holst's work.
\end{remark}


\subsection{Asymptotic behavior of $\lambda_{\theta}$}
\label{sec:asymptotics}

In this section we extract the leading asymptotic behavior of $\lambda_{\theta}$ in the regimes of $\theta \to 0$ and $\theta \to \infty$ directly from the integral representation \eqref{eq:ggd}.

\begin{proposition}
\label{prop:smalltheta}
As $\theta \to 0$,
\begin{equation}
\label{eq:smalltheta}
\lambda_{\theta} = 1 - (\ln{2})\theta + O(\theta^{2}).
\end{equation}
\end{proposition}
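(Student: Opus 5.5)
Starting from the integral representation \eqref{eq:ggd}, I will expand the factor $\exp[-\theta E_{1}(t)]$ in powers of $\theta$ under the integral. At $\theta=0$ the integral equals $\int_{0}^{\infty}e^{-t}\,dt=1$. Using the elementary inequality $0\le 1-e^{-u}\le u$ for $u\ge 0$, I write
\begin{equation}
1-\lambda_{\theta}=\int_{0}^{\infty}e^{-t}\bigl(1-e^{-\theta E_{1}(t)}\bigr)\,dt .
\end{equation}
I then split this as $\theta\int_{0}^{\infty}e^{-t}E_{1}(t)\,dt$ minus a remainder. The inequality $0\le u-(1-e^{-u})\le u^{2}/2$ bounds the remainder by $\tfrac{\theta^{2}}{2}\int_{0}^{\infty}e^{-t}E_{1}(t)^{2}\,dt$. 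So the proof reduces to two facts: the linear coefficient equals $\ln 2$, and $\int_{0}^{\infty}e^{-t}E_{1}(t)^{2}\,dt<\infty$.

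For the coefficient I use the same Fubini swap as in the proof of the main theorem. Since $E_{1}(t)=\int_{t}^{\infty}e^{-u}u^{-1}\,du$ has a nonnegative integrand, Tonelli gives
\begin{equation}
\int_{0}^{\infty}e^{-t}E_{1}(t)\,dt=\int_{0}^{\infty}\frac{e^{-u}}{u}\int_{0}^{u}e^{-t}\,dt\,du=\int_{0}^{\infty}\frac{e^{-u}-e^{-2u}}{u}\,du=\ln 2 .
\end{equation}
The last integral is a Frullani integral with parameters $1$ and $2$.

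For the remainder I need only integrability, not the exact value. Near $0$ we have $E_{1}(t)=-\gamma-\ln t+O(t)$, so $E_{1}(t)^{2}=O(\ln^{2}t)$, which is integrable on $(0,1]$. For $t\ge 1$, $E_{1}(t)\le e^{-t}$, so the integrand is at most $e^{-3t}$. Hence the integral is a finite constant $C$, and $|1-\lambda_{\theta}-\theta\ln 2|\le C\theta^{2}/2$, which is the claim. Alternatively, one can again swap integrals, bounding $E_{1}(t)^{2}\le 2\int_{t}^{\infty}e^{-u}u^{-1}E_{1}(u)\,du$ and then applying Tonelli.

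The only step that needs real care is checking that the second-order remainder is uniformly integrable near $t=0$, where $E_{1}$ has its logarithmic singularity. The logarithmic estimate above settles this, so I expect no genuine obstacle. The identification of the coefficient as $\ln 2$ through Frullani's integral is the key computation.
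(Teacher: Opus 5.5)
Your proof is correct and follows the paper's route: expand $\exp[-\theta E_{1}(t)]$ to first order in $\theta$, then swap the integrals to reduce the linear coefficient to $\int_{0}^{\infty}(e^{-u}-e^{-2u})u^{-1}\,du=\ln 2$. The differences are minor. You evaluate that integral by Frullani, whereas the paper substitutes $(1-e^{-u})/u=\int_{0}^{1}e^{-uv}\,dv$. Your bound $0\le u-(1-e^{-u})\le u^{2}/2$, together with $\int_{0}^{\infty}e^{-t}E_{1}(t)^{2}\,dt<\infty$, rigorously justifies the term-by-term $O(\theta^{2})$ step, which the paper takes for granted despite the logarithmic singularity of $E_{1}$ at $0$.
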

\begin{proof}
Expanding $\exp[-\theta E_{1}(t)] = 1-\theta E_{1}(t) + O(\theta^{2})$ in~\eqref{eq:ggd} and integrating term by term gives
\begin{equation}
\begin{split}
\lambda_{\theta} &= \int_{0}^{\infty} e^{-t}\,dt 
-\theta\int_{0}^{\infty} e^{-t} E_{1}(t)\,dt + O(\theta^{2}) \\
& = 1 - \theta \int_{0}^{\infty} e^{-t} E_{1}(t)\,dt + O(\theta^{2}).
\end{split}
\end{equation}
Substituting the definition of $E_{1}(t)$ and swapping the order of integration yields for the remaining integral above
\begin{equation}
\label{eq:firstorder}
\begin{split}
\int_{0}^{\infty} e^{-t} E_{1}(t)\,dt 
&= \int_{0}^{\infty} e^{-t} \int_{t}^{\infty} \frac{e^{-u}}{u}\,du\,dt \\
&= \int_{0}^{\infty} \frac{e^{-u}}{u} \int_{0}^{u} e^{-t}\,dt\,du 
= \int_{0}^{\infty} \frac{1 - e^{-u}}{u}\,e^{-u}\,du.
\end{split}
\end{equation}
If we write $(1-e^{-u})/u = \int_{0}^{1} e^{-uv}\,dv$ and integrate \eqref{eq:firstorder} in~$du$ first we get $\ln{2}$, and we are done.
\end{proof}

\begin{proposition}
\label{prop:largetheta}
As $\theta \to \infty$,
\begin{equation}
\label{eq:largetheta}
\lambda_{\theta} = \frac{\ln{\theta}}{\theta} - \frac{\ln{\ln{\theta}} - \gamma}{\theta} + o\Bigl(\frac{1}{\theta}\Bigr),
\end{equation}
where $\gamma \approx 0.577215$ is Euler's constant.
\end{proposition}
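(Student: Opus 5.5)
We work directly from the integral representation \eqref{eq:ggd}, $\lambda_\theta=\int_0^\infty \exp[-t-\theta E_1(t)]\,dt$. As $\theta\to\infty$ the factor $\exp[-\theta E_1(t)]$ acts like a smoothed step function. It is essentially $0$ where $\theta E_1(t)\gg1$ and essentially $1$ where $\theta E_1(t)\ll 1$. Since $E_1(t)\sim e^{-t}/t$ for large $t$, the transition occurs near the point $t_\theta$ solving $\theta e^{-t}/t=1$, that is $t_\theta=\ln\theta-\ln\ln\theta+o(1)$. Because $\lambda_\theta\approx\int_{t_\theta}^\infty e^{-t}\,dt\approx e^{-t_\theta}$, this already gives $\lambda_\theta\approx \ln\theta/\theta$. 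To obtain the second-order term with the constant $\gamma$, we need the exact shape of the transition, so we change variables rather than estimate crudely.

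The key step is the substitution $u=\theta E_1(t)$, which is a decreasing bijection from $(0,\infty)$ onto $(0,\infty)$ with $du=-\theta e^{-t}/t\,dt$. This gives
\begin{equation*}
\lambda_\theta=\frac1\theta\int_0^\infty t(u)\,e^{-u}\,du,\qquad t(u)\deq E_1^{-1}(u/\theta).
\end{equation*}
So $\theta\lambda_\theta=\EE[t(U)]$ with $U\sim\mathrm{Exp}(1)$, which is just the identity $\EE[X_1]=\theta\lambda_\theta$ from the proof of the theorem, since $\theta E_1(X_1)$ is exponential. It then suffices to show that $E_1^{-1}(y)=\ln(1/y)-\ln\ln(1/y)+o(1)$ as $y\to0$, uniformly enough in $u$. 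Inverting the asymptotic $E_1(t)=\frac{e^{-t}}{t}(1+O(1/t))$ gives
\begin{equation*}
t(u)=\ln\theta-\ln u-\ln\ln(\theta/u)+o(1).
\end{equation*}
Taking expectations, we use $\EE[-\ln U]=\gamma$ and $\ln\ln(\theta/U)=\ln\ln\theta+o(1)$ for typical $U$. This yields $\theta\lambda_\theta=\ln\theta-\ln\ln\theta+\gamma+o(1)$, which is the claimed expansion.

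The main obstacle is justifying the passage to the limit under the expectation, i.e.\ dominated convergence over the full range of $u$, since the expansion of $E_1^{-1}(u/\theta)$ is only valid when $u/\theta$ is small. We plan to split into three regions: $u\le\theta^{-1/2}$, $\theta^{-1/2}<u\le\theta^{1/2}$, and $u>\theta^{1/2}$. On the middle region $\ln(\theta/u)\to\infty$ uniformly, and the remainder is uniformly $O(\ln\ln\theta/\ln\theta)=o(1)$; we will control the term $\ln\ln(\theta/u)-\ln\ln\theta=\ln\bigl(1-\ln u/\ln\theta\bigr)$ by $O(|\ln u|/\ln\theta)$, which is integrable against $e^{-u}$. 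For small $u$ we use the bound $E_1(t)\ge e^{-t}/(t+1)$, which gives $t(u)\le \ln(\theta/u)+O(1)$; its contribution is bounded by $\int_0^{\theta^{-1/2}}(\ln\theta+|\ln u|)\,du=o(1)$. For large $u$ we use $t(u)\le E_1^{-1}(u/\theta)\le C\ln\theta$ together with a tail of $e^{-\sqrt\theta}$, which is negligible. Combining the three regions gives an error of $o(1)$ for $\theta\lambda_\theta$, that is, $o(1/\theta)$ for $\lambda_\theta$.
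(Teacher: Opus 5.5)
Your proposal is correct and follows the paper's route: the substitution $u=\theta E_1(t)$, the asymptotic inversion $t=\ln\theta-\ln\ln\theta-\ln u+o(1)$, and $\int_0^\infty e^{-u}\ln u\,du=-\gamma$. You do it more cleanly, since your change of variables is exact and gives $\theta\lambda_\theta=\int_0^\infty E_1^{-1}(u/\theta)\,e^{-u}\,du$ without the paper's approximation $du\approx -u\,dt$, and your three-region split supplies the dominated-convergence justification the paper omits. One local slip: $E_1(t)\ge e^{-t}/(t+1)$ only gives a \emph{lower} bound on $t(u)$, so the upper bound you need for small and large $u$, namely $t(u)\le\max\{1,\ln(\theta/u)\}$, must come instead from $E_1(t)\le e^{-t}/t$.
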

\begin{proof}
As $\theta \to \infty$, the integral \eqref{eq:ggd} is dominated by large values of $t$, where the exponential integral behaves as $E_{1}(t) \sim e^{-t}/t$. Since $u = \theta E_{1}(t) \approx \theta e^{-t}/t$ for large $t$, the relation $\theta e^{-t}/t = u$ gives $du = -\theta e^{-t} (t+1)/t^{2}\,dt \approx -u\,dt$. Inverting the asymptotic relation between $u$ and $t$ gives $t = \ln{\theta} - \ln{\ln{\theta}} - \ln{u} + o(1)$, which substituted into the integral furnishes
\begin{equation}
    \lambda_\theta \approx \int_0^\infty \frac{t}{\theta} e^{-u} \, du = \frac{1}{\theta} \int_{0}^{\infty} (\ln{\theta} - \ln{\ln{\theta}} - \ln{u})\mku e^{-u}\,du.
\end{equation}
Evaluating the standard integral $\int_{0}^{\infty}e^{-u}\ln{u}\,du = -\gamma$ yields the stated result.
\end{proof}

Note that the decay of $\ln{\theta}/\theta$ is slower than $1/\theta$. Since $1/\theta$ would be the expected proportion of the longest cycle if all cycles had equal length, the logarithmic correction reflects the fact that random fluctuations persistently produce one cycle that captures a disproportionate share of the total mass, even when the Ewens measure strongly favors many small cycles. Note also that the appearance of Euler's constant in \eqref{eq:largetheta} reflects the logarithmic growth of the harmonic sums that govern the cycle count distribution.


\section{Numerics, simulations and applications}
\label{sec:numberics}

\subsection{Numerical values}
\label{subsec:numbers}

Figure~\ref{fig:ggd} displays the behavior of $\lambda_{\theta} \times \theta$ for $0 \le \theta \le 5$, while Table~\ref{tab:ggd} lists $\lambda_{\theta}$ for a few selected $\theta$. These values were calculated in Python to a precision of $\approx 10^{-12}$ from the routines \texttt{scipy.integrate.quad} and \texttt{scipy.special.exp1}. The particular value ${\lambda_{\theta}=0.5}$ appears at ${\theta \simeq 1.784910}$.

\begin{figure}[ht]
\centering
\includegraphics[scale=0.55, clip]{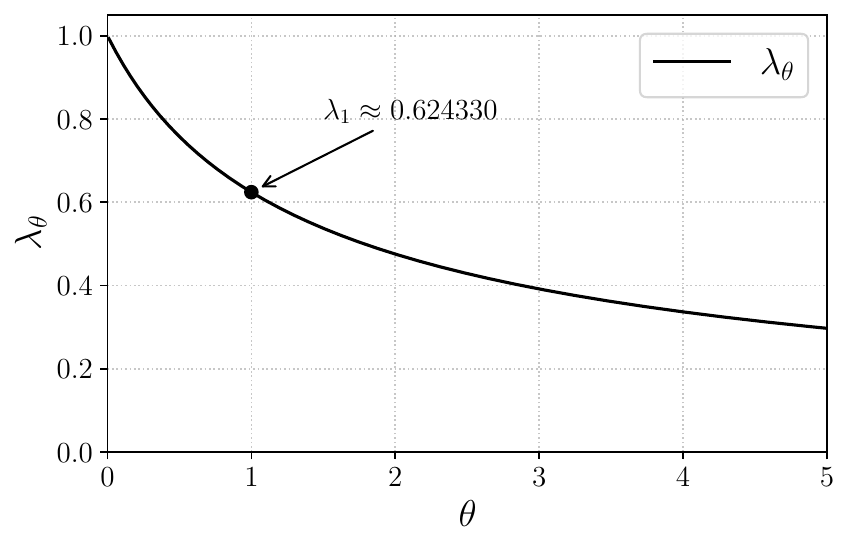}
\caption{Generalized Golomb--Dickman constant $\lambda_{\theta}$, equation \eqref{eq:ggd}.}
\label{fig:ggd}
\end{figure}

\begin{table}[ht]
\caption{Numerical value of $\lambda_{\theta}$ (rounded to $6$ decimal places) for a few selected $\theta$.}
\label{tab:ggd}
\centering
\begin{minipage}{0.30\textwidth}
\centering
    \begin{tabular}{cc}
    \hline \hline
    $\theta$ & $\lambda_{\theta}$ \\ 
    \hline
    $1/10$   & $0.936295$ \\
    $1/8$    & $0.921937$ \\
    $1/6$    & $0.899210$ \\
    $1/5$    & $0.882027$ \\
    $1/4$    & $0.857758$ \\
    $1/3$    & $0.820854$ \\
    $1/2$    & $0.757823$ \\ 
    $2/3$    & $0.705779$ \\
    \hline \hline
    \end{tabular}
\end{minipage}
\begin{minipage}{0.30\textwidth}
\centering
    \begin{tabular}{cc}
    \hline \hline
    $\theta$ & $\lambda_{\theta}$ \\ 
    \hline
    $3/4$    & $0.682960$ \\
    $1$      & $0.624330$ \\
    $3/2$    & $0.537540$ \\
    $2$      & $0.475639$ \\
    $3$      & $0.391838$ \\
    $4$      & $0.336771$ \\
    $5$      & $0.297288$ \\ 
    $10$     & $0.194884$ \\ 
    \hline \hline
    \end{tabular}
\end{minipage}
\end{table}

Formula \eqref{eq:ggd} gives the expected proportion of the longest component in the ``spaghetti hoops problem''~\cite{Tavare2021}. In this problem, one starts with $n$ strands of spaghetti and repeatedly picks two free ends uniformly at random and ties them together until no free ends remain, producing a collection of closed loops. This process gives rise to the Ewens distribution with parameter $\theta = 1/2$, and the expected proportion of the total length contained in the longest loop is therefore given by $\lambda_{1/2}$ in the limit $n \to \infty$. From Table~\ref{tab:ggd}, we see that approximately $75.8\%$ of the spaghetti will end up tied together in one big loop. 

\subsection{Simulation of the Hoppe urn model}
\label{subsec:hoppe}

The Hoppe urn provides a stochastic mechanism to generate random partitions whose distribution coincides with the Ewens sampling formula ~\cite{Aldous1985,Hoppe1984}. The process starts with a distinguished, say, black ball of weight $\theta$. At each step, a ball is selected with probability proportional to its weight. If the black ball is chosen, a new color is introduced; otherwise, the selected color is reinforced. After $n$ steps, the resulting color classes define a random partition of $[\mku{n}\mku]$ according to the $\ESF(\theta)$ distribution. 

An algorithmic description of the Hoppe urn process goes as follows:
\begin{itemize}
\item[1.] Initialize one color class and set its size to zero.
\item[2.] For $k = 1, \dots, n$,
\begin{itemize}
\item[a.] With probability $\theta/(\theta + k - 1)$, create a new color class;
\item[b.] Otherwise, choose an existing class with probability proportional to its size and add one element to it.
\end{itemize}
\item[3.] Return the class sizes $n_{1}, \dots, n_{\ell}$, $1 \le \ell \le n$.
\end{itemize}
Figure~\ref{fig:singlerun} displays the time evolution of the color proportions in one simulation of $50$ draws in a Hoppe urn with $\theta=1$.

\begin{figure}[h]
\centering
\includegraphics[scale=0.55, clip]{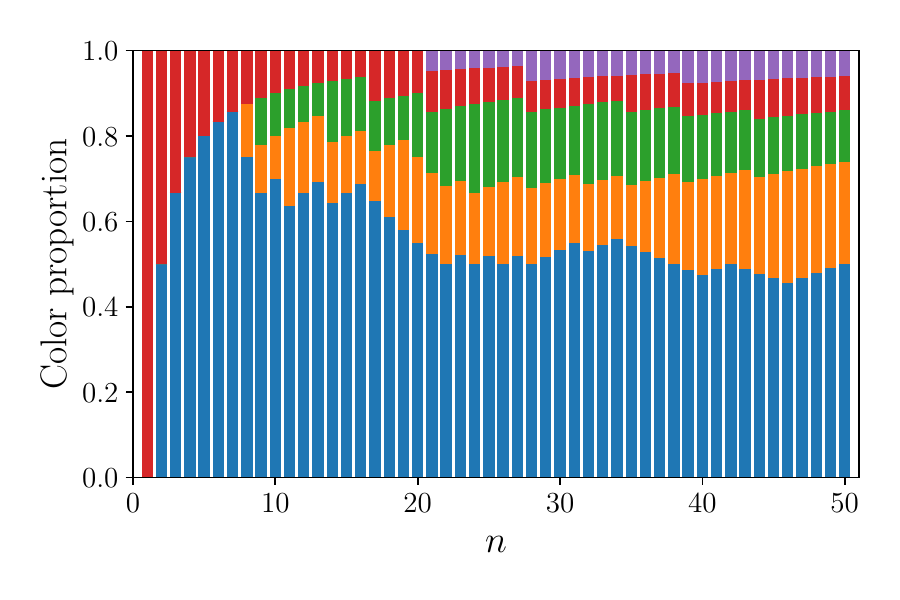}
\caption{Timeline of the color proportions in one simulation of $50$ draws in a Hoppe urn with $\theta=1$.}
\label{fig:singlerun}
\end{figure}

\begin{remark}
\label{rmk:chinese}
The Hoppe urn is closely related to the Chinese restaurant process (CRP), another sequential construction that generates permutations with the $\ESF(\theta)$ distribution~\cite{Aldous1985,Pitman2006}. In the CRP, customers labeled $1, 2, \ldots, n$ enter a restaurant one at a time; customer~$k$ either joins an occupied table with probability proportional to its occupancy, or opens a new table with probability $\theta/(\theta+k-1)$. The two constructions are equivalent: drawing the black ball in the Hoppe urn corresponds to opening a new table, and drawing a numbered ball to joining an existing one. We refer to \cite{Crane2016} for an overview of both the Hoppe urn and the CRP and their connections to the Ewens sampling formula.
\end{remark}

The simulation of the Hoppe urn model from the description given above is immediate, and the size of the largest class in the urn, normalized by $n$, provides a Monte Carlo estimator for $\lambda_{\theta}$.  Repeating the simulation many times and averaging the results yields empirical estimates of the asymptotic expected proportion of the largest cycle. Figure~\ref{fig:mcarlo} displays the Monte Carlo estimates of $\lambda_{\theta}$ for several values of $\theta$ obtained from averaging $10000$ runs of the Hoppe urn up to $n=1000$ draws, together with the exact value of $\lambda_{\theta}$ from \eqref{eq:ggd} and the asymptotic curves \eqref{eq:smalltheta} and \eqref{eq:largetheta}. The simulations illustrate the dependence of $\lambda_{\theta}$ on $\theta$ and agree with the theoretical predictions.

\begin{remark}
The empirical averages obtained from the simulations are subject to standard Monte Carlo sampling errors. For the sample sizes considered to produce Figure~\ref{fig:mcarlo}, these statistical fluctuations are negligible and therefore omitted, as the numerical data are included for illustration only.
\end{remark}

\begin{figure}[h]
\centering
\includegraphics[scale=0.55, clip]{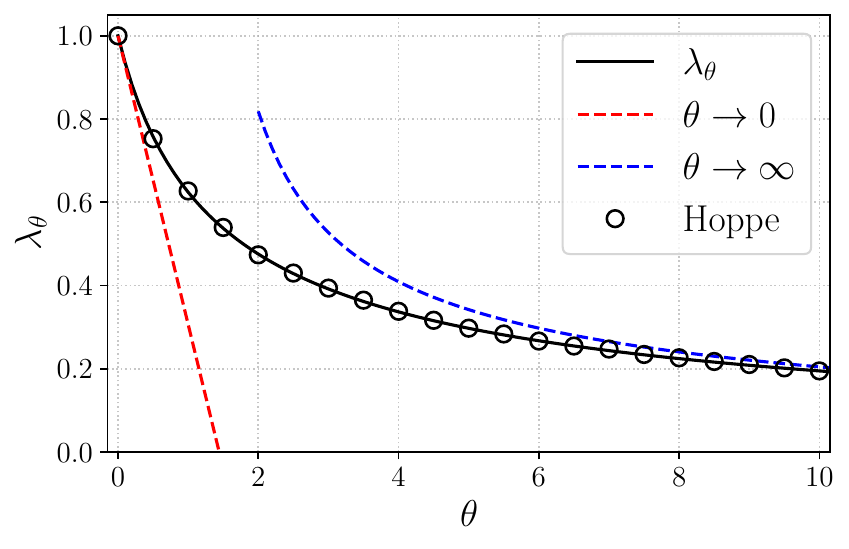}
\caption{Monte Carlo estimates of $\lambda_{\theta}$ obtained via the
Hoppe urn simulation compared with the theoretical curves.}
\label{fig:mcarlo}
\end{figure}



\begin{thebibliography}{00}

\bibitem{Aldous1985}
D. J. Aldous,
\textit{Exchangeability and related topics},
In: P. L. Hennequin (Ed.), \'{E}cole d'Et\'{e} de Probabilit\'{e}s de Saint-Flour XIII, 1983, Lecture Notes in Mathematics \textbf{1117},
Springer, Berlin, 1985, pp.~1--198.
\MR{0883646}

\bibitem{ABT2003}
R. Arratia, A. D. Barbour, and S. Tavar\'{e},
\textit{Logarithmic Combinatorial Structures: A Probabilistic Approach},
European Mathematical Society, Z\"{u}rich, 2003. 
\MR{2032426}

\bibitem{Arratia1992}
R. Arratia and S. Tavar{\'e},
\textit{The cycle structure of random permutations},
{The Annals of Probability} \textbf{20} (1992), no.~3, 1567--1591.
\MR{1175278}

\bibitem{Campisi2024}
M. Campisi,
\textit{Lectures on the Mechanical Foundations of Thermodynamics}, 2nd ed.
Springer, Cham, 2024.

\bibitem{Crane2016}
H. Crane, 
\textit{The ubiquitous Ewens sampling formula},
{Statistical Science} \textbf{31} (2016), no.~1, 1--19.
\MR{3458585}

\bibitem{Ercolani2014}
N. M. Ercolani and D. Ueltschi,
\textit{Cycle structure of random permutations with cycle weights},
{Random Structures \&\ Algorithms} \textbf{44} (2014), no.~1, 109--133.
\MR{3143592}

\bibitem{Ewens1972}
W. J. Ewens,
\textit{The sampling theory of selectively neutral alleles},
{Theoretical Population Biology} \textbf{3} (1972), 87--112.
\MR{0325177}

\bibitem{Ford2022}
K. Ford,
\textit{Cycle type of random permutations: A toolkit},
{Discrete Analysis} (2022), art.~9 (36pp). 
\MR{4481406}

\bibitem{Goncharov1944}
V. Gon\v{c}arov, 
\textit{On the field of combinatory analysis},
In: Twelve Papers on Number Theory and Function Theory,
{American Mathematical Society Translations (2)} \textbf{19} (1962), 1--46. \MR{0131369}
[Originally published in {Izvestiya Akademii Nauk SSSR, Seriya Matematicheskaya} \textbf{8} (1944), no.~1, 3--48, in Russian. \MR{0010922}]
    
\bibitem{Holst2001}
L. Holst, 
\textit{The Poisson-Dirichlet distribution and its relatives revisited},
preprint (2001), Royal Institute of Technology, Stockholm, Sweden, URL: \url{https://www.math.kth.se/matstat/fofu/reports/PoiDir.pdf}.

\bibitem{Hoppe1984}
F. M. Hoppe,
\textit{P{\'o}lya-like urns and the {E}wens' sampling formula},
{Journal of Mathematical Biology} \textbf{20} (1984), no.~1, 91--94.

\bibitem{Jeffreys1972}
H. Jeffreys and B. S. Jeffreys, 
\textit{Methods of Mathematical Physics}, 3rd~ed.,
Cambridge University Press, Cambridge, 1972.
\MR{1744997}

\bibitem{Kingman1975}
J. F. C. Kingman, 
\textit{Random discrete distributions},
{Journal of the Royal Statistical Society. Series B (Methodological)} \textbf{37} (1975), no.~1, 1--22.
\MR{0368264}

\bibitem{Kingman1993}
J. F. C. Kingman, 
\textit{Poisson Processes},
Oxford Studies in Probability \textbf{3}, 
Oxford University Press, Oxford, 1993.
\MR{1207584}

\bibitem{Pitman2006}
J. Pitman, 
\textit{Combinatorial Stochastic Processes}, In: J. Picard (Ed.), \'{E}cole d'Et\'{e} de Probabilit\'{e}s de Saint-Flour XXXII, 2002, Lecture Notes in Mathematics \textbf{1875},
Springer, Heidelberg, 2006.
\MR{2245368}

\bibitem{SheppLLoyd1966}
L. A. Shepp and S. Lloyd,
\textit{Ordered cycle lengths in a random permutation},
{Transactions of the American Mathematical Society} \textbf{121} (1966), 340--357.
\MR{195117}

\bibitem{TerryTao2013}
T. Tao,
\textit{The Poisson--Dirichlet process, and large prime factors of a random number},
{What's New Daily Archive}, URL: \url{https://terrytao.wordpress.com/2013/09/21/}. 

\bibitem{Tavare2021}
S. Tavar{\'e}, 
\textit{The magical {E}wens sampling formula},
{Bulletin of the London Mathematical Society} \textbf{53} (2021), no.~6, 1563--1582.
\MR{4368686}

\bibitem{Touchard1939}
J. Touchard, 
\textit{Sur les cycles des substitutions},
{Acta Mathematica} \textbf{70} (1939), no.~1, 243--297.
\MR{1555449}

\end{thebibliography}
\end{document}